\documentclass[11pt,reqno]{amsart}
\usepackage{amssymb,mathrsfs,mathtools}
\mathtoolsset{showonlyrefs}
\usepackage{hyperref}
\usepackage[shortlabels]{enumitem}
\usepackage{geometry}

\usepackage{tikz,lipsum,lmodern}
\usepackage[most]{tcolorbox}
\usepackage{xcolor}

\numberwithin{equation}{section}

\newtheorem{mainthm}{Theorem}
\newtheorem{thm}{Theorem}[section]

\newtheorem{lem}[thm]{Lemma}

\theoremstyle{remark}

\def\bR {\mathbb{R}}

\def\bZ {\mathbb{Z}}

\def\grad {{\nabla}}

\newcommand{\ud}{\mathrm{d}}

\newcommand{\inte}{\operatorname{int}}

\title{Half-Space Theorem for Minimal Hypersurfaces in $\bR^4$}
\author{Shrey Aryan}
\author{Alexander D. McWeeney}

\begin{document}
\begin{abstract}
The three-dimensional catenoid in $\mathbb{R}^4$ is a complete embedded minimal hypersurface contained in a slab, showing that the half-space theorem does not extend directly to higher dimensions. We show that this obstruction is topological in $\mathbb{R}^4$. More precisely, we prove that a connected, complete, embedded minimal hypersurface $\Sigma^3\subset\mathbb{R}^4$ contained in a half-space with bounded curvature and trivial second homology must be a hyperplane.
\end{abstract}
\maketitle
\section{Introduction}
The strong half-space theorem of Hoffman and Meeks
\cite{hoffmanMeeks1990} states that a connected, proper minimal surface contained in a half-space of $\mathbb{R}^3$ is flat. In the same work, they also noted that a higher-dimensional generalization of this result fails since the three-dimensional catenoid $\mathcal{C}^3\subset\mathbb{R}^4$ lies in a slab. However, the neck of the catenoid contributes nontrivial second homology, i.e., \(H_2(\mathcal C^3;\mathbb Z_2)\neq0\). It is therefore natural to ask whether a minimal hypersurface $\Sigma^3\subset\mathbb{R}^4$ confined to a half-space with trivial second homology \(H_2(\Sigma;\mathbb Z_2)=0\) is flat. To this end, we show that
\begin{mainthm}\label{thm: maintheorem}
Let \(\Sigma^3 \subset \mathbb{R}^4\) be a complete connected embedded minimal hypersurface with bounded curvature and \(H_2(\Sigma; \mathbb{Z}_2) = 0\). If \(\Sigma\) is contained in a half-space, then \(\Sigma\) is a plane.
\end{mainthm}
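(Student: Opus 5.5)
Suppose $\Sigma\subset\{x_4> 0\}$, after translating so that $\inf_\Sigma x_4=0$, and assume $\Sigma$ is not a hyperplane. The plan is to follow the Hoffman–Meeks strategy and get a contradiction from a barrier argument. We want a stable minimal hypersurface trapped between $\Sigma$ and $\{x_4=0\}$ that must be a hyperplane. Alternatively, we use a translated catenoid barrier whose neck would force nontrivial topology.

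\textbf{Step 1 (limit at the infimum).} Use bounded curvature. Choose points $p_k\in\Sigma$ with $x_4(p_k)\to 0$ and translate horizontally so the $p_k$ become $(0,0,0,x_4(p_k))$. Uniform curvature bounds give uniform graphical radius, so the translates $\Sigma_k$ subconverge smoothly on compact sets to a complete minimal lamination $\mathcal L$ of $\{x_4\ge 0\}$. The leaf through the origin touches $\{x_4=0\}$ at an interior point. By the maximum principle it equals $\{x_4=0\}$. Hence $\Sigma$ is asymptotic to the plane $\{x_4=0\}$ along the sequence, and near $p_k$ it consists of graphs over large balls of $\{x_4=0\}$. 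If the infimum is attained, the maximum principle directly gives $\Sigma=\{x_4=c\}$. So we may assume it is not attained.

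\textbf{Step 2 (catenoid barrier, where topology enters).} Fix a three-dimensional catenoid $\mathcal C^3$, which lies in a slab of width $2h$, with its axis vertical. Place a half of it, i.e.\ the part $\mathcal C^+$ above the neck, so that its bounded limiting sheet lies slightly above $\{x_4=0\}$. Since $\Sigma$ is a graph of small height over a huge ball near $p_k$, the rescaled catenoid $\lambda\mathcal C^+$ with neck disjoint from $\Sigma$ exists for small $\lambda$. Then run the Hoffman–Meeks continuity argument. Lower and enlarge the catenoid (vary $\lambda$ and the height) until a first contact with $\Sigma$ occurs. The contact is interior, because the boundary behavior is controlled by the graph structure from Step 1. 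The maximum principle then gives $\Sigma\supset$ a piece of the catenoid, so by analytic continuation $\Sigma$ is a full catenoid. The catenoid has $H_2(\mathcal C^3;\mathbb Z_2)=\mathbb Z_2$, which contradicts the hypothesis.

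\textbf{Main obstacle.} The key difficulty is the case where no first contact occurs. In that case the catenoids could pass through $\Sigma$ without touching it, because $\Sigma$ might wrap around the neck. Here $H_2=0$ is essential, and I would use it as follows. The region $W$ of $\{x_4>0\}$ below $\Sigma$ is mean-convex, and $\Sigma$ separates $\bR^4$ because $H_1$ vanishes mod 2 by properness and embeddedness. The linking sphere $S^2$ of the catenoid neck, i.e.\ $\mathcal C^3\cap\{x_4=\text{neck}\}$, sits inside $W$. If $\Sigma$ were disjoint from the family, Alexander duality would make a 2-sphere in $\Sigma$ nontrivial in $H_2(\Sigma;\mathbb Z_2)$. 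Concretely, minimize area in $W$ with the neck sphere as obstacle to get a stable minimal hypersurface, or use Schoen's stable Bernstein theorem in $\bR^4$ (stable complete minimal hypersurfaces in $\bR^4$ are flat, by Chodosh–Li). Either way we conclude flatness or produce a nonseparating 2-cycle on $\Sigma$. I expect this duality/linking step, ensuring the barrier actually touches rather than threads through $\Sigma$, to be the hardest part. I would handle it by proving that the disk $\{x_4=\epsilon\}\cap B_R$ bounded by the neck sphere must meet $\Sigma$ in a compact 2-cycle homologous to the neck sphere.
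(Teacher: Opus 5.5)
Your proposal has a genuine gap. The catenoid barrier step does not transfer to $\mathbb R^4$, and the part where $H_2(\Sigma;\mathbb Z_2)=0$ is supposed to enter is not an argument.

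The Hoffman--Meeks proof works in $\mathbb R^3$ because catenoidal ends grow logarithmically. Suitably placed half-catenoids meet the open half-space in \emph{compact} annuli. As the neck shrinks, these annuli sweep out an entire slab $\{0<x_3<\delta\}$ outside a compact set, and properness then forces any first contact to be an interior tangency. The three-dimensional half-catenoid in $\mathbb R^4$ is instead asymptotic to a hyperplane at bounded height. So the compact pieces of the analogous family stop before filling a slab: they only fill a region whose height tends to $0$ at infinity, and a hypersurface with $\inf x_4=0$ can avoid that region. This is exactly how $\mathcal C^3$ evades the half-space theorem.

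Concretely, in Step 2 you only arrange that the \emph{neck} of $\lambda\mathcal C^+$ is disjoint from $\Sigma$, so the continuity argument has no disjoint starting configuration. Moreover, between two noncompact hypersurfaces the critical parameter can be reached with the surfaces approaching each other only at infinity, or touching only along the neck sphere $\partial\mathcal C^+$. In neither case does the maximum principle give anything. Step 1 gives graphical control only over large but finite balls along a sequence, so it rules out neither possibility. Finally, Step 2 uses $H_2(\Sigma;\mathbb Z_2)=0$ only at the very end, to exclude $\Sigma=\mathcal C^3$. It plays no role in producing the contact, which is where the hypothesis must do its work.

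Your ``main obstacle'' paragraph is where the theorem actually lives, and it contains no proof.
\begin{itemize}
\item Nothing explains why disjointness from the barrier family would produce a nonzero class in $H_2(\Sigma;\mathbb Z_2)$. The neck sphere bounds the flat $3$-ball at the neck height. You never exhibit a sphere that is nontrivial in $H_2(\mathbb R^4\setminus\Sigma;\mathbb Z_2)$; for proper $\Sigma$ this group is isomorphic to $H_2(\Sigma;\mathbb Z_2)$ by Alexander and Poincar\'e duality, so such a sphere is exactly what you would need.
\item An area minimization constrained by the neck sphere yields a compact stable hypersurface with boundary. The stable Bernstein theorem of Chodosh--Li concerns complete hypersurfaces without boundary. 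Passing to a complete limit would require ruling out escape to infinity and collapse onto $\{x_4=0\}$, which you do not address.
\item Your last sentence does point at the right use of the hypothesis. If a regular level set $\Sigma\cap\{x_4=\epsilon\}$ had a compact component, then $H_2=0$ (which gives one end and makes every closed two-sided surface bound a compact domain) plus the maximum principle for the harmonic function $x_4|_\Sigma$ would make $\Sigma$ a hyperplane. But this only shows that all regular level sets are noncompact, which is not contradictory by itself. You give no reason why $\Sigma$ must meet the neck ball in a compact set.
\end{itemize}

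The paper supplies the missing quantitative step as follows.
\begin{itemize}
\item Bounded curvature gives $|\nabla_\Sigma h|^2\le 2\Lambda_*h$, so each component of $\{h<\tau\}$ is a global graph of slope less than one.
\item Testing $\Delta_\Sigma\tfrac12(\tau-h)^2=|\nabla_\Sigma h|^2$ against the superharmonic weight $(1+|x|^2)^{-1/2}$, with cubic volume growth of the graph, gives $\int_{\mathbb R^3}|\nabla g|^2(1+|y|)^{-1}\,\mathrm{d}y<\infty$.
\item The noncompact level components at heights $\tau$ and $t$ project to unbounded connected sets meeting every large sphere. A two-cap capacity estimate then gives $\int_{\partial B_r}|\nabla g|^2\gtrsim 1/\log r$, so the weighted integral diverges.
\end{itemize}

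Separately, you use properness throughout, for separation of $\mathbb R^4$ and for compactness of contact sets. Properness is not a hypothesis; the paper obtains it from the properness theorem for complete embedded minimal hypersurfaces with bounded curvature and finite second Betti number.
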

Although we assume that the hypersurface has bounded curvature, we do not assume properness, unlike in the strong half-space theorem \cite{hoffmanMeeks1990}. Properness of minimal hypersurfaces with bounded curvature and finite second Betti number $\operatorname{dim}H_2(\Sigma;\mathbb{Z}_2)<\infty$ was recently proved in \cite{aryanMcWeeneyTinaglia2026properness}. 

The early literature related to half-space theorems was motivated in part by the Calabi--Yau conjectures for complete minimal surfaces. Jorge and Xavier \cite{jorgeXavier1980} constructed a complete non-flat minimal surface contained between two parallel planes, showing that completeness alone does not prevent a minimal surface from being confined to a slab. Xavier's result \cite{xavier1984} showed it suffices to add the assumption of bounded curvature, and Hoffman and Meeks \cite{hoffmanMeeks1990} then showed it suffices to add the assumption of properness. Subsequent works have generalized
these results under various assumptions involving bounded curvature,
graphicality, or recurrence of either the minimal surface or the ambient manifold
\cite{bessaJorgeOliveiraFilho2001,rosenbergSchulzeSpruck2013,
colomboMagliaroMariRigoli2022,bessaJorgePessoa2024}.

In a related direction, Nadirashvili \cite{nadirashvili1996hadamard} constructed a complete bounded minimal immersion of a disk into $\mathbb{R}^3$, while Colding and Minicozzi \cite{coldingMinicozzi2008calabiYau} proved that complete embedded minimal surfaces in $\mathbb{R}^3$ with finite topology are proper. Their work relied on the structure theory of embedded minimal surfaces developed in \cite{coldingMinicozzi2004I,coldingMinicozzi2004II,
coldingMinicozzi2004III,coldingMinicozzi2004IV,coldingMinicozzi2015V}.
A key tool in this theory is the one-sided curvature estimate, which may be viewed as a local form of half-space rigidity and was later used by Meeks and Rosenberg \cite{meeksRosenberg2005uniqueness} in their classification of properly embedded simply connected minimal surfaces in $\mathbb R^3$.

Our motivation in the present work is to understand analogous questions for complete minimal hypersurfaces, where many of the classical two-dimensional tools, such as the Weierstrass representation,
parabolicity, and Gauss--Bonnet, are no longer available. In earlier work
\cite{aryan2026calabiyauconjecturesminimalhypersurfaces}, we constructed examples of complete improperly embedded minimal hypersurfaces in
$\mathbb{R}^{n+1}$, $n\geq3$, contained in a slab. These examples have unbounded curvature and infinite topology, and show that additional hypotheses are needed in higher dimensions to resolve the Calabi--Yau conjectures. In this work, we investigate another piece of this puzzle by trying to understand extensions of the half-space-type theorems for minimal hypersurfaces in $\mathbb{R}^4$. From Xavier's work and the properties of the catenoid we are naturally led to the assumptions of our main theorem.

Our work is inspired by the recent work by Colding and Minicozzi on properly embedded minimal hypersurfaces in slabs \cite{colding2026minimal}. In particular, when $\dim\Sigma=3$, they prove that any such hypersurface has cubic volume growth and satisfies a weighted tilt estimate: If $h=x_4|_\Sigma$ is the height function, $\Sigma\subset\bR^3\times[0,1]$, and $|x|$ denotes the Euclidean distance from $x\in\Sigma$ to the origin, then
\begin{equation}
        \int_\Sigma
        \frac{|\grad_\Sigma h|^2}{|x|}
        \,\ud\mu_\Sigma
        <\infty.
\end{equation}
We interpret this result as a bound on how ``nongraphical'' a hypersurface $\Sigma$ in a slab can be. For $\Sigma$ in a half-space, we prove a similar integral bound for a subset of $\Sigma$ close to the boundary plane of the half-space.

Finally, we record some remarks about the assumptions in our main theorem. First, as we noted earlier, the topological assumption in Theorem~\ref{thm: maintheorem} cannot be removed, since the three-dimensional catenoid $\mathcal C^3\subset\mathbb{R}^4$ satisfies all the assumptions except that its second homology is nontrivial. Next, the confinement to a half-space assumption also appears to be subtle. In particular, if $\mathcal H^2\subset\mathbb{R}^3$ denotes the standard helicoid, then $\mathcal H^2\times\mathbb{R}\subset\mathbb{R}^4$ is a complete, properly embedded, non-flat minimal hypersurface with bounded curvature and is diffeomorphic to $\mathbb{R}^3$. However, its convex hull is all of $\mathbb{R}^4$, so it is not contained in any slab or half-space, and its volume growth is quartic rather than cubic. Finally, embeddedness and bounded curvature cannot both be removed since by Nadirashvili's construction \cite{nadirashvili1996hadamard}, there is a complete bounded minimal immersion $\widetilde\Sigma^2\subset B_1(0)\subset\mathbb{R}^3$ with disk topology. Hence $\widetilde\Sigma^2\times\mathbb{R}$ is a complete minimal immersion,
diffeomorphic to $\mathbb{R}^3$, and contained in a slab in $\mathbb{R}^4$. This example is not properly embedded and has unbounded curvature. The most natural weakening of our assumptions would seem to be to allow unbounded curvature or to consider higher dimensional hypersurfaces, but it is not yet clear to us how to generalize in this direction. Besides this point, however, these examples show that our theorem isolates a true rigidity property of minimal hypersurfaces in \(\mathbb{R}^4\).
\subsection{Proof Sketch}
We first use the main theorem proved recently in \cite{aryanMcWeeneyTinaglia2026properness} to assume that $\Sigma$ is proper. After a rigid motion and a vertical translation, let
\begin{align*}
        h=x_4|_\Sigma\geq0,
        \quad
        \inf_\Sigma h=0.
\end{align*}
If $h$ vanishes at a point, the strong minimum principle gives the result. Thus, we may assume that $h>0$. Since $\inf_\Sigma h=0$, the function $h$ is nonconstant. Bounded curvature gives $|\grad_\Sigma h|^2\leq2\Lambda_*h$ for $\Lambda_*=\max\{1,\sup_\Sigma|A_\Sigma|\}<\infty$. Choose a sufficiently small regular value $\tau\in(0,\min\{1,\sup_\Sigma h\})$. We then show that every connected component $V$ of $\{h<\tau\}$ is a global graph
\begin{align*}
        V=\{(y,u(y)):y\in D\},
        \qquad
        u=\tau\text{ on }\partial D,
        \qquad
        |Du|<1.
\end{align*}
Here $D=\pi(V)$ is the vertical projection of $V$ on the plane $\{x_4=0\}.$ Choose $V$ which meets $\{h<\tau/4\}$. Since $H_2(\Sigma;\mathbb Z_2)=0$, every connected component of a regular height level is noncompact. Thus, for a regular value $t\in(\tau/4,\tau/2)$, there are connected noncompact components
\begin{align*}
        S\subset\partial V\cap\{h=\tau\},
        \quad
        \Gamma\subset V\cap\{h=t\}.
\end{align*}
Extend $g=\tau-u$ by zero outside $D$. On $V$, let $x:\Sigma\to\bR^4$ denote the position vector and set
\begin{align*}
        E=|\grad_\Sigma h|^2,
        \quad
        \Phi=\frac12(\tau-h)^2,
        \quad
        \psi=(1+|x|^2)^{-1/2}.
\end{align*}
The identities $\Delta_\Sigma\Phi=E$, $\Delta_\Sigma\psi\leq0$, and the cubic volume bound for the single graph $V$ give
\begin{align*}
        \int_{\mathbb R^3}
        \frac{|\grad g|^2}{1+|y|}
        \,\ud y
        <\infty.
\end{align*}
However, the projected sets $\pi(S)$ and $\pi(\Gamma)$ are connected and unbounded, and $g$ takes two different constant values on them. Thus the projected sets meet every sufficiently large sphere in $\bR^3$. The Lipschitz bound on $g$ gives two fixed-size caps on each such sphere where the values of $g$ are uniformly separated. An estimate for the Dirichlet energy of $g$ restricted to these spheres yields a lower bound for the integral in the preceding display and shows that it diverges, a contradiction. Hence $h$ vanishes somewhere, and the strong maximum principle implies that $\Sigma$ is a hyperplane.

\subsection{Acknowledgments}
The authors thank their advisors Tobias Colding and William Minicozzi for their commentary and support. The first author acknowledges support from the Simons Dissertation Fellowship and NSF DMS Grant 2405393. The second author acknowledges support from the National Science Foundation.

\section{Preliminaries}
In this section we show that \(H_2(\Sigma; \mathbb{Z}_2) = 0\) implies a topological separation theorem. This will in particular show that the height function cannot have compact level sets using the separation and the maximum principle.

\begin{lem}\label{lem:one-end-compact-side}
Let $M^3 \subset \mathbb{R}^4$ be an orientable connected noncompact smooth manifold without boundary and suppose that
\begin{align}
        H_2(M;\bZ_2)=0.
        \label{eq:H2-vanishes}
\end{align}
Then $M$ has one end. Moreover, if $\Gamma\subset M$ is a compact connected smoothly embedded two-sided surface without boundary, then
\begin{align}
        M\setminus\Gamma=U_\Gamma\sqcup V_\Gamma,
\end{align}
where $U_\Gamma$ is relatively compact and $V_\Gamma$ is not relatively compact. In particular, $\overline U_\Gamma$ is a compact smooth domain with boundary $\Gamma$.
\end{lem}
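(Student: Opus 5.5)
We will prove both assertions with Poincaré duality and the Mayer--Vietoris sequence, using $\mathbb Z_2$ coefficients throughout. The hypothesis that $M\subset\mathbb R^4$ is a submanifold plays no role beyond guaranteeing orientability and second countability.

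\emph{One end.} Suppose $M$ had at least two ends. Then there is a compact smooth domain $K\subset M$ such that $M\setminus \inte K$ has at least two noncompact components; enlarging $K$ by absorbing the compact components, we may assume every component of $M\setminus\inte K$ is noncompact. Let $W$ be one of these components and $\Sigma_0=\partial W\subset\partial K$, a nonempty compact surface. We will show $[\Sigma_0]\neq 0$ in $H_2(M;\mathbb Z_2)$.

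Choose a properly embedded ray $\gamma$ that starts in a second noncompact component, crosses $K$, and escapes to infinity inside $W$. Arrange $\gamma$ to be transverse to $\Sigma_0$ and to meet it exactly once. Intersection number mod $2$ with the proper ray $\gamma$ defines a homomorphism $H_2(M;\mathbb Z_2)\to\mathbb Z_2$, namely pairing with the class of $\gamma$ in locally finite (Borel--Moore) homology $H_1^{lf}(M;\mathbb Z_2)$ via the duality $H_2\times H^{lf}_1\to\mathbb Z_2$. The pairing is well defined on homology because a null-homologous compact cycle bounds a compact $3$-chain, and a proper ray crosses the boundary of a compact region an even number of times. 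Hence $\#(\Sigma_0\cap\gamma)=1$ forces $[\Sigma_0]\neq 0$, contradicting \eqref{eq:H2-vanishes}. So $M$ has one end.

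\emph{Separation.} Let $\Gamma$ be a compact connected two-sided embedded surface. Since $[\Gamma]=0$ in $H_2(M;\mathbb Z_2)$, the same intersection argument shows that every closed loop meets $\Gamma$ transversally an even number of times. Two-sidedness gives a collar $\Gamma\times(-1,1)$, and any point of $M\setminus\Gamma$ can be joined within $M\setminus\Gamma$ to one of the two sides $\Gamma\times\{\pm\frac12\}$. So $M\setminus\Gamma$ has at most two components. If the two sides could be joined by a path in $M\setminus\Gamma$, closing that path through the collar would give a loop meeting $\Gamma$ exactly once, which is impossible. Hence $M\setminus\Gamma=A\sqcup B$ with exactly two components, each having closure with boundary $\Gamma$.

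It remains to see that exactly one of $A,B$ is relatively compact. If both were, then $M=\overline A\cup\overline B$ would be compact, contradicting noncompactness of $M$. If neither were, pick a compact domain $K\supset\Gamma$. Then both $A\setminus K$ and $B\setminus K$ contain unbounded pieces in distinct components of $M\setminus K$, since $\Gamma\subset K$ separates them. This produces two ends, contradicting the first part.

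So write $U_\Gamma$ for the relatively compact component and $V_\Gamma$ for the other. Then $\overline U_\Gamma=U_\Gamma\cup\Gamma$ is compact, and it is a smooth domain with boundary $\Gamma$ by the collar. The main technical point is making the mod $2$ intersection pairing rigorous; concretely this can be done by Poincaré--Lefschetz duality on a compact exhaustion domain containing $K$ or $\Gamma$, which is routine.
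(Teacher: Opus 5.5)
\textbf{The one-end step fails as written.} The problem is that $\gamma$ is a \emph{ray}. A proper ray $\gamma\colon[0,\infty)\to M$ has boundary $\gamma(0)$, so it is not a cycle in $H_1^{lf}(M;\mathbb{Z}_2)$. Your parity claim is also false for it. A small sphere centred at $\gamma(0)$ bounds a ball, so it is zero in $H_2(M;\mathbb{Z}_2)$, yet it meets $\gamma$ exactly once. Intersection with a ray is therefore not well defined on $H_2$.

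What you need is a properly embedded \emph{line}, built as follows:
\begin{enumerate}
\item a proper ray to infinity inside the second noncompact component $W'$, traversed backwards;
\item then a path through $K$;
\item then a proper ray to infinity inside $W$;
\item the whole curve perturbed to be embedded and transverse to $\Sigma_0$.
\end{enumerate}
Both ends of this curve leave every compact set. So it crosses the boundary of any compact region an even number of times and is a genuine locally finite cycle. This is exactly what the second noncompact component is for, and with this change your first part is correct. Your use of closed loops in the separation step has no such issue, since loops are compact cycles.

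If you want to avoid transversality for singular chains altogether, argue via duality. Poincar\'e duality gives $H_2(M;\mathbb{Z}_2)\cong H^1_c(M;\mathbb{Z}_2)$. In the exact sequence $H^0(M;\mathbb{Z}_2)\to\varinjlim_K H^0(M\setminus K;\mathbb{Z}_2)\to H^1_c(M;\mathbb{Z}_2)$, the indicator function of $W\setminus K$ is not the germ of a constant, because $W$ and $W'$ are both non-relatively-compact. Hence it maps to a nonzero class, contradicting $H_2(M;\mathbb{Z}_2)=0$.

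\textbf{Comparison with the paper.} Apart from this, your route is self-contained where the paper's relies on citations.
\begin{itemize}
\item The paper gets one-endedness from \cite[Lemma~2.1]{chodosh20263}.
\item For separation, it argues by contradiction with a path $\alpha$ joining the two sides of a collar. It encloses a bounding $3$-chain, $\alpha$ and the collar in a compact domain $N$, so that $[\Gamma]=0$ in $H_2(N,\partial N;\mathbb{Z}_2)$, and then applies \cite[Proposition~1.7.17]{martelliGeometricTopology} in $N$.
\item It gets relative compactness of $U_\Gamma$ from \cite[Lemma~2.8]{chodosh20263}.
\end{itemize}
You replace the first two inputs by mod $2$ intersection numbers: a proper line for ends, closed loops for separation. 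You replace the third by a direct observation: two relatively compact sides would make $M$ compact, and two non-relatively-compact sides would give two ends.

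The paper's passage to a compact $N$ is precisely the localization your deferred ``Poincar\'e--Lefschetz duality on a compact exhaustion domain'' would supply, so both proofs rest on the same homological input. Yours makes explicit that one-endedness is what singles out a unique noncompact side; the paper's is shorter.

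One small point in your last step: you implicitly use that $M\setminus\inte K$ has finitely many components. This holds for a compact smooth domain $K$ in connected $M$, since every component meets the compact surface $\partial K$.
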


\begin{proof}
The proof of \cite[Lemma~2.1]{chodosh20263} applies with
$\bZ_2$-coefficients. Indeed, if $M$ had more than one end, there would be a smooth relatively compact domain $K\Subset M$ such that $M\setminus\overline K$ has at least two connected components that are not relatively compact. Let $E$ be one of these components. Then both $E$ and $M\setminus\overline E$ are not relatively compact. As in the proof of \cite[Lemma~2.1]{chodosh20263}, this gives $[\partial E]\neq0$ in $H_2(M;\bZ_2)$, contradicting \eqref{eq:H2-vanishes}. Thus $M$ has one end.

Now suppose, for contradiction, that \(\Gamma\) does not separate \(M\).
Let \(\alpha\) be a path in \(M\setminus\Gamma\) joining the opposite
sides of a tubular neighborhood of \(\Gamma\). Since
\([\Gamma]=0\) in \(H_2(M;\bZ_2)\), there is a singular \(3\)-chain
\(C\) with \(\bZ_2\)-coefficients such that \(\partial C\) is the
fundamental cycle of \(\Gamma\). Choose a compact connected smooth
domain \(N\subset M\) whose interior contains
\(\operatorname{spt}C\), the path \(\alpha\), and the closure of this
tubular neighborhood. Then \(C\) is a \(3\)-chain in \(N\) whose
support avoids \(\partial N\), and hence
\[
        [\Gamma]=0
        \quad\text{in}\quad
        H_2(N,\partial N;\bZ_2).
\]
The proof of
\cite[Proposition~1.7.17]{martelliGeometricTopology}, applied with
\(\bZ_2\)-coefficients, shows that \(\Gamma\) separates \(N\).
Since \(N\) and \(\Gamma\) are connected and \(\Gamma\) is two-sided,
\(N\setminus\Gamma\) has exactly two connected components, and the
opposite sides of the tubular neighborhood lie in different
components. This contradicts the existence of
\(\alpha\subset N\setminus\Gamma\).

Thus \(\Gamma\) separates \(M\). Since \(M\) and \(\Gamma\) are
connected and \(\Gamma\) is two-sided, \(M\setminus\Gamma\) has
exactly two connected components. Since \(M\) is noncompact, at least
one of these components is not relatively compact. Denote such a
component by \(V_\Gamma\) and denote the other by \(U_\Gamma\).
Since \(\partial U_\Gamma=\Gamma\) is compact and
\(M\setminus U_\Gamma\) contains the non-relatively-compact component
\(V_\Gamma\), \cite[Lemma~2.8]{chodosh20263} gives
\(U_\Gamma\Subset M\). Finally, the tubular neighborhood theorem shows
that \(\overline U_\Gamma\) is a compact smooth domain with boundary \(\Gamma\).
\end{proof}

\begin{lem}\label{lem:no-compact-regular-level}
Let $h:M^3\rightarrow\bR$ be a nonconstant harmonic function with respect to the metric induced from $\bR^4$, where $M$ satisfies the assumptions of Lemma~\ref{lem:one-end-compact-side}. If $c$ is a regular value of $h$, then $\{h=c\}$ has no compact connected component.
\end{lem}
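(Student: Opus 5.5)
Suppose, for contradiction, that $\Gamma$ is a compact connected component of $\{h=c\}$ for a regular value $c$. The plan is to use the separation statement of Lemma~\ref{lem:one-end-compact-side} to trap $\Gamma$ as the boundary of a compact region, and then apply the maximum principle there.

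First we check that $\Gamma$ satisfies the hypotheses of Lemma~\ref{lem:one-end-compact-side}. Since $c$ is a regular value, $\Gamma$ is a smooth embedded closed surface. It is two-sided, because $\grad h$ is a nonvanishing normal vector field along it; this uses the orientability of $M$ only through the normal field. By Lemma~\ref{lem:one-end-compact-side}, $M\setminus\Gamma=U_\Gamma\sqcup V_\Gamma$, where $\overline U_\Gamma$ is a compact smooth domain with $\partial U_\Gamma=\Gamma$.

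Next we apply the weak maximum and minimum principles to the harmonic function $h$ on $\overline U_\Gamma$. Since $h\equiv c$ on the boundary, we get $h\equiv c$ on $\overline U_\Gamma$. The set $U_\Gamma$ is a nonempty open subset of $M$: near any point of $\Gamma$, one side of the tubular neighborhood lies in $U_\Gamma$. Harmonic functions on a connected Riemannian manifold satisfy unique continuation (they are real-analytic, or one can apply the strong maximum principle on open sets). Hence $h\equiv c$ on all of $M$, contradicting the assumption that $h$ is nonconstant.

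Both steps are routine. The only point requiring care is the two-sidedness of $\Gamma$, and even that is immediate from the regular-value condition. Lemma~\ref{lem:one-end-compact-side} supplies all the topological content. A contradiction also arises more directly: $\grad h\neq 0$ on $\Gamma$, yet $h$ would be constant in a neighborhood of $\Gamma$.
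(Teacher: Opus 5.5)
Your proof is correct and follows the paper's argument: Lemma~\ref{lem:one-end-compact-side} produces a compact domain with boundary $\Gamma$, and the maximum and minimum principles force $h\equiv c$ on it. The paper concludes at that point by contradicting the regularity of $c$, which is your closing remark, so the unique-continuation step is not needed (note that $h$ is constant only on one side of $\Gamma$, but continuity then gives $\nabla h=0$ on $\Gamma$).
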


\begin{proof}
Suppose that $\Gamma$ is a compact connected component of $\{h=c\}$. Since $c$ is a regular value, $\Gamma$ is a smoothly embedded two-sided closed surface. By Lemma~\ref{lem:one-end-compact-side}, there is a compact smooth domain $\Omega\subset M$ satisfying $\partial\Omega=\Gamma$. Since $h=c$ on $\partial\Omega$, the maximum and minimum principles give $h=c$ on $\Omega$. This contradicts the fact that $c$ is a regular value.
\end{proof}

\section{Proof of Theorem~\ref{thm: maintheorem}}
Our goal is now to find a large graphical subset of \(\Sigma\) and derive a weighted tilt integral on that subset analogous to the one obtained by Colding and Minicozzi \cite{colding2026minimal}. We begin by doing some normalizations of the surface.

Let $\Sigma$ satisfy the assumptions of Theorem~\ref{thm: maintheorem}. By \cite{aryanMcWeeneyTinaglia2026properness}, $\Sigma$ is proper. Suppose that $\Sigma$ is not a hyperplane. Note that because \(\Sigma\) is complete, properly embedded, and has codimension 1, it is orientable \cite{samelson1969Orientability}. After a rigid motion, suppose that $\Sigma$ is contained in a half-space bounded by a horizontal hyperplane. Translating by $-\inf_\Sigma x_4$, set
\begin{align}
        h=x_4|_\Sigma.
\end{align}
Then the harmonicity of the coordinate functions and the maximum principle give
\begin{align}
        h> 0,
        \qquad
        \inf_\Sigma h=0,
        \qquad
        \Delta_\Sigma h=0.
        \label{eq:height-basic-properties}
\end{align}
Let
\begin{align}
        \pi:\mathbb R^4\rightarrow\mathbb R^3,
        \qquad
        \pi(x_1,x_2,x_3,x_4)=(x_1,x_2,x_3),
\end{align}
be the projection map and set
\begin{align}
        \Lambda_*=\max\{1,\sup_\Sigma|A_\Sigma|\}.
\end{align}
Since we assume \(\Sigma\) is not a hyperplane, \(h\) is nonconstant. Since \(\inf_\Sigma h=0\), Sard's theorem gives a regular value $\tau$ of \(h\) satisfying
\begin{align}
        0<\tau<\frac{1}{4}\min\left\{\sup_\Sigma h, \,\frac{1}{4\Lambda_*}\right\}.
        \label{eq:tau-choice}
\end{align}

We now claim that the connected components of the set \(\{h < \tau\}\) are graphs over \(\{x_4 = 0\}\) with bounded gradient, which we establish in the following lemmas. First we show that \(|\nabla_\Sigma h|\) is small when \(h\) is small; this essentially follows from the fact that when \(h\) is small, bounded curvature forces the tangent plane of \(\Sigma\) to be almost parallel to \(\{x_4 = 0\}\), or else \(\Sigma\) would not be in a half-space.

\begin{lem}\label{lem:height-gradient}
    For \(\Sigma\) satisfying the assumptions of Theorem~\ref{thm: maintheorem}, we have
    \begin{align}
        |\nabla_\Sigma h|^2 \le 2\Lambda_* h.
        \label{eq:height-gradient-estimate}
    \end{align}
\end{lem}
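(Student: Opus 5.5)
Fix a point $p\in\Sigma$ with $\nabla_\Sigma h(p)\neq0$ (otherwise there is nothing to prove) and follow a curve in $\Sigma$ that descends in height. The idea is to compare $h$ with the second-order Taylor expansion of the height along a geodesic of $\Sigma$, using the curvature bound to control the second derivative. Write $e_4$ for the vertical unit vector. Along a unit-speed geodesic $\gamma$ of $\Sigma$ with $\gamma(0)=p$, the function $f(s)=h(\gamma(s))=\langle\gamma(s),e_4\rangle$ satisfies
\begin{align}
f''(s)=\langle \gamma''(s),e_4\rangle=\langle A_\Sigma(\gamma',\gamma')\,\nu,e_4\rangle,
\end{align}
since the acceleration of a geodesic of $\Sigma$ in $\bR^4$ is purely normal. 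Hence $|f''|\le|A_\Sigma|\le\Lambda_*$ along $\gamma$. By completeness of $\Sigma$ (Hopf--Rinow), $\gamma$ is defined for all $s\in\bR$.

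Choose the initial direction $\gamma'(0)=-\nabla_\Sigma h(p)/|\nabla_\Sigma h(p)|$, so that $f'(0)=-|\nabla_\Sigma h(p)|=:-g$. Taylor's theorem with the second-derivative bound gives
\begin{align}
f(s)\le h(p)-gs+\tfrac{\Lambda_*}{2}s^2 \qquad \text{for all } s\ge0.
\end{align}
Since $h>0$ on $\Sigma$ by \eqref{eq:height-basic-properties} (or merely $h\ge0$ by the half-space assumption), the right-hand side must be nonnegative for every $s\ge0$. Minimizing at $s=g/\Lambda_*$ yields $h(p)-g^2/(2\Lambda_*)\ge0$, that is, $|\nabla_\Sigma h(p)|^2\le 2\Lambda_*h(p)$, which is \eqref{eq:height-gradient-estimate}.

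The only step needing care is the identity $f''=\langle A(\gamma',\gamma')\nu,e_4\rangle$ together with the bound $|A(\gamma',\gamma')|\le|A_\Sigma|$ and $|\langle\nu,e_4\rangle|\le1$. Also, the geodesic must exist up to $s=g/\Lambda_*$; this follows from completeness alone, so properness is not needed here. The argument is essentially the standard fact that a nonnegative function with second derivative bounded by $\Lambda$ along geodesics satisfies $|\nabla f|^2\le 2\Lambda f$.
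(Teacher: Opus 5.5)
Your proposal is correct and follows essentially the same route as the paper: the paper computes the full Hessian $\nabla_\Sigma^2 h = w A_\Sigma$ (with $w=\langle e_4,\nu\rangle$) instead of $f''$ along the geodesic, then runs the identical Taylor argument along the unit-speed geodesic in direction $-\nabla_\Sigma h/|\nabla_\Sigma h|$, evaluated at $s=|\nabla_\Sigma h(p)|/\Lambda_*$. Your remark that only completeness and $h\geq 0$ are needed, not properness, is also accurate.
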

\begin{proof}
Let $\nu$ be a local choice of unit normal on $\Sigma$, use the convention
\begin{align}
        A_\Sigma(X,Y)
        =
        \langle\nabla_XY,\nu\rangle,
\end{align}
and define $w=\langle e_4,\nu\rangle$. Then for \(X, Y \in T \Sigma\) we have
\begin{align}
    \nabla_\Sigma^2h(X, Y) & = \langle \nabla_X \nabla_\Sigma h, Y\rangle 
    \\ & = \langle \nabla_X e_4^T, Y \rangle
    \\ & = \langle \nabla_X e_4 - \nabla_X (w\nu), Y \rangle
    \\ & = -(\nabla_X w)\langle \nu, Y \rangle - w\langle \nabla_X \nu, Y\rangle
    \\ & = w \langle \nabla_X Y, \nu\rangle
    \\ & = wA_\Sigma(X, Y).
\end{align}
It now follows that
\begin{align}
        |\grad_\Sigma^2h|= |w A_\Sigma| \le \Lambda_*.
\end{align}
Fix $p\in\Sigma$ and set $a=|\grad_\Sigma h(p)|$. If $a>0$, let $\gamma$ be the complete unit-speed geodesic satisfying
\begin{align}
        \gamma(0)=p,
        \qquad
        \gamma'(0)=-\frac{\grad_\Sigma h(p)}{a}.
\end{align}
Since $h>0$, Taylor's formula and the estimate $|(h\circ\gamma)''|\leq\Lambda_*$ give
\begin{align}
        0
        <
        h\left(\gamma\left(\frac{a}{\Lambda_*}\right)\right)
        \leq
        h(p)-\frac{a^2}{2\Lambda_*}.
\end{align}
Therefore
\begin{align}
        |\grad_\Sigma h|^2
        \leq
        2\Lambda_*h,
\end{align}
as we wanted to show.
\end{proof}
Now we prove that the components of \(\{h < \tau\}\) are indeed graphs.

\begin{lem}\label{lem:low-component-global-graph}
Let $V$ be a connected component of $\{h<\tau\}$, and set
\begin{align}
        M=\overline V \cap \Sigma,
        \qquad
        D=\pi(V).
\end{align}
Here $\overline V$ denotes the closure of $V$ in $\Sigma$. Then $\pi:M\rightarrow\overline D$ is a diffeomorphism. In particular, there is $u\in C^\infty(\overline D)$ such that
\begin{align}
        V=\{(y,u(y)):y\in D\},
        \qquad
        u=\tau
        \quad\text{on }\partial D,
\end{align}
and
\begin{align}
        |Du|<1.
        \label{eq:low-component-slope}
\end{align}
\end{lem}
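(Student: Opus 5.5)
We show that on $M$ the tangent spaces are uniformly close to horizontal, so $\pi|_M$ is a local diffeomorphism. Properness then upgrades it to a covering map onto its image, and we check that this covering is injective and that its image is exactly $\overline D$.

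\textbf{Step 1 (slope).} On $M$ we have $h\le\tau$. By Lemma~\ref{lem:height-gradient} and \eqref{eq:tau-choice},
\[
|\grad_\Sigma h|^2\le 2\Lambda_*\tau<\tfrac{1}{8}.
\]
Since $|\grad_\Sigma h|^2=1-w^2$ with $w=\langle e_4,\nu\rangle$, this gives $w^2>7/8$ on $M$. Hence $d\pi|_{T_p\Sigma}$ is an isomorphism, and the slope satisfies $|Du|^2=(1-w^2)/w^2<1/7<1$. So $\pi|_M$ is a local diffeomorphism, including at boundary points.

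$\partial V\subset\{h=\tau\}$ is a smooth surface because $\tau$ is a regular value. Since $\grad_\Sigma h\neq0$ there and points transversally to the level set, its projection is locally a smooth hypersurface in $\bR^3$ bounding the projected side. So $\pi|_M$ is a local diffeomorphism of manifolds with boundary onto its image.

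\textbf{Step 2 (properness and path lifting).} $\Sigma$ is proper and $h$ is bounded on $M$, so $\pi|_M$ is a proper map: the preimage of a compact set lies in a compact set of $\bR^4$, hence in a compact subset of $\Sigma$. Take $E=\pi(M)$. Then $\pi:M\to E$ is a proper local homeomorphism, and in particular $E$ is closed.

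We also need $\pi(\partial V)=\partial E$ and $\pi(V)=\inte E$. Consider $y\in\pi(V)$. Near the lift, $M$ is a graph over a full ball around $y$, so $y$ is interior. Conversely, the image of $\partial V$ is locally a graph boundary with $E$ on one side only, because $V$ lies where $h<\tau$.

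Once this is established, $\pi:M\to E$ is a finite covering map of manifolds with boundary. The idea is to lift straight segments in $D$, using the slope bound $|Du|<1$: a lift can fail to continue only by exiting through $\partial V$, and it can never escape to infinity because of properness.

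\textbf{Step 3 (injectivity).} This is the main obstacle. Suppose $p\neq q$ in $M$ with $\pi(p)=\pi(q)=y$. Then $\Sigma$ has two sheets over $y$ at heights in $(0,\tau]$, and I would derive a contradiction from embeddedness and the half-space condition.

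First I would try this. Let $E$ be the connected component containing $y$; since $V$ is connected, the covering is connected. The covering has a well-defined finite degree $k\ge1$. If $k\ge2$, the sheets over $\inte E$ are ordered by height, because embeddedness prevents the graphs from crossing. Take the lowest sheet $u_1$ and the next one $u_2>u_1$. Each sheet is an entire graph over $\inte E$ with boundary value $\tau$ on $\partial E$, so both sheets meet $\partial E$ at height $\tau$ simultaneously. There the two graphs must join, since $\partial V$ is a single smooth surface lying over $\partial E$. But $u_1=u_2=\tau$ at the same boundary point, with both sheets transverse to $\{h=\tau\}$ from the same side, contradicts embeddedness at that point of $\Sigma$.

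If $\partial E=\emptyset$, then $E=\bR^3$. In that case the sheets are entire graphs with $|Du|<1$ and values in $(0,\tau)$. By Bernstein-type results for bounded-slope minimal graphs (Moser's Liouville theorem), each is a constant hyperplane. Then $\Sigma$ is a hyperplane, contradicting our assumption that it is not.

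Hence $k=1$ and $\pi:M\to\overline D$ is a diffeomorphism. The inverse is $y\mapsto(y,u(y))$ with $u$ smooth, $u=\tau$ on $\partial D$, and $|Du|<1$ from Step 1.
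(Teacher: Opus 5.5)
\textbf{The gap is in Step 2.} Everything in Step 3 rests on the claim that $\pi(V)=\inte E$ and $\pi(\partial V)=\partial E$, which is what makes $\pi:M\to E$ a covering. You justify it by saying the image of $\partial V$ is locally a graph boundary with $E$ on one side only, but that describes only the piece of $M$ near a single boundary point $p$. It says nothing about other points of $M$ in the same fiber. A priori there could be $q\in V$ with $\pi(q)=\pi(p)$ and $h(q)<\tau$. The sheet through $q$ then covers a full ball around $\pi(p)$, so $\pi(p)\in\inte E$, the number of preimages jumps across $\pi(\partial V)$, and $\pi:M\to E$ is not a covering. Ruling out exactly this configuration (a boundary point of $M$ sitting over a lower interior sheet) is the real content of the lemma. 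Your Step 3 only disposes of the case where the covering already exists, so the argument is circular. In that case the conclusion is in fact immediate: the $k$ fiber points over a point of $\partial E$ all have height $\tau$, so by embeddedness they coincide and $k=1$. You do not need the global ``sheets ordered by height'', which need not exist anyway if $\inte E$ is not simply connected.

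The paper closes this gap differently. It takes the component $M_\varepsilon$ of $M\cap\{h\ge\varepsilon\}$ containing a path between two putative points of a fiber. It notes that $\pi|_{M_\varepsilon}$ is a proper local diffeomorphism that is injective on each boundary component, since each lies in one level set, and then invokes \cite[Lemma~1.4]{meeksRosenberg2005uniqueness}. Your lowest-sheet idea can also give a direct proof if you run it on $M$ itself rather than on a presupposed covering. Let $L\subset M$ be the set of points that are lowest in their $\pi$-fiber; $L$ is nonempty.
\begin{itemize}
\item $L$ is open. Fibers are finite. By properness, for $y'$ near $y$ the fiber over $y'$ lies in any prescribed neighborhood of the fiber over $y$. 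By embeddedness, distinct points of a fiber have distinct heights.
\item $L$ is closed. Suppose $p_j\in L$, $p_j\to p$, and some $p'$ over $\pi(p)$ has $h(p')<h(p)\le\tau$. Then $p'$ is an interior point of $M$, because $\partial M$ sits at the maximal height $\tau$. A graphical neighborhood of $p'$ covers a ball around $\pi(p)$, so for large $j$ it contains a point over $\pi(p_j)$ lower than $p_j$, a contradiction.
\end{itemize}
Since $M$ is connected, $L=M$, which is injectivity. The facts $\pi(\partial V)\cap\pi(V)=\emptyset$ and $\pi(M)=\overline D$ then follow.
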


\begin{proof}
Since $V$ is a connected component of $\{h<\tau\}$, it is closed in $\{h<\tau\}$, and hence $M\cap\{h<\tau\}=V.$ Since $\tau$ is a regular value, $M$ is a connected smooth three-manifold with boundary satisfying
\begin{align}
        \inte M=V,
        \quad
        \partial M=M\cap\{h=\tau\}.
        \label{eq:M-boundary}
\end{align}
Indeed, if $p\in M\setminus V$, regular-level coordinates centered at $p$ identify $\{h<\tau\}$ locally with a connected half-ball. Since this half-ball meets $V$, it is contained in $V$. For $p\in M$ and $v\in T_p\Sigma$, equations \eqref{eq:tau-choice} and \eqref{eq:height-gradient-estimate} give
\begin{align}
        |d\pi_p(v)|^2
        &=
        |v|^2-dh(v)^2\geq
        \left(1-|\grad_\Sigma h(p)|^2\right)|v|^2
        \geq
        \frac12|v|^2.
        \label{eq:vertical-projection-lower}
\end{align}
Thus $M$ is locally graphical over $\{x_4=0\}$, up to its boundary, and every local graphing function has gradient of norm less than one.

We now show that \(M\) is a global graph. Suppose that $p_1,p_2\in M$ satisfy \(\pi(p_1)=\pi(p_2)\), and choose a path $\gamma\subset M$ joining \(p_1\) and \(p_2\). Choose a regular value
\begin{align*}
        0<\varepsilon<\min_\gamma h,
\end{align*}
and let $M_\varepsilon$ be the connected component of $M\cap\{h\geq\varepsilon\}$ containing $\gamma$. We will use \cite[Lemma~1.4]{meeksRosenberg2005uniqueness} to show that \(\pi|_{M_\varepsilon}\) is injective.

Since $\varepsilon$ and $\tau$ are regular values, $M_\varepsilon$ is a connected smooth three-manifold with boundary. Moreover, $M_\varepsilon$ is closed in $\Sigma$, since it is a connected component of the closed set $M\cap\{h\geq\varepsilon\}$. Next, we claim that $\pi|_{M_\varepsilon}$ is proper. Indeed, if $K\subset\mathbb R^3$ is compact, then
\begin{align*}
        (\pi|_{M_\varepsilon})^{-1}(K)
        \subset
        \Sigma\cap\left(K\times[\varepsilon,\tau]\right).
\end{align*}
Because $\Sigma$ is proper, the set on the right is compact. Since $M_\varepsilon$ is closed in $\Sigma$ and $\pi$ is continuous, $(\pi|_{M_\varepsilon})^{-1}(K)$ is a closed subset of this compact set. Thus $\pi|_{M_\varepsilon}$ is proper.

Finally, each connected component of $\partial M_\varepsilon$ lies in either $\{h=\varepsilon\}$ or $\{h=\tau\}$. Since $h$ is constant on each such component, $\pi$ is injective on it. Hence \cite[Lemma~1.4]{meeksRosenberg2005uniqueness} and \eqref{eq:vertical-projection-lower} imply that $\pi|_{M_\varepsilon}$ is injective. Thus $p_1=p_2$, and $\pi|_V$ is a diffeomorphism onto $D$.

We next identify the image of $M$. Since $M=\overline V$ and $\pi$ is continuous, $\pi(M)\subset\overline D.$ Conversely, let $y_j\in D$ converge to $y\in\overline D$, and choose
$p_j\in V$ such that $\pi(p_j)=y_j$. Let $K\subset\mathbb R^3$ be a
compact set containing the sequence $\{y_j\}$. Then
\begin{align}
        p_j
        \in
        \Sigma\cap\left(K\times[0,\tau]\right).
\end{align}
Since $\Sigma$ is proper, a subsequence converges to some
$p\in\Sigma$. As $p_j\in V$, we have $p\in M$, and continuity gives
$\pi(p)=y$. Consequently,
\begin{align}
        \pi(M)=\overline D.
        \label{eq:projection-closure}
\end{align}

Moreover, $\pi|_M:M\rightarrow\overline D$ is proper. Indeed, if
$K\subset\overline D$ is compact, then $(\pi|_M)^{-1}(K)$ is a closed
subset of the compact set
\begin{align}
        \Sigma\cap\left(K\times[0,\tau]\right).
\end{align}
Thus $\pi|_M$ is a proper continuous bijection and therefore a
homeomorphism. Since $\pi|_V$ is a local diffeomorphism, $D$ is open.
Injectivity and \eqref{eq:M-boundary} give
\begin{align}
        \pi(\partial M)
        =
        \pi(M\setminus V)
        =
        \overline D\setminus D
        =
        \partial D.
        \label{eq:projection-boundary}
\end{align}

For $p\in\partial M$, equation
\eqref{eq:vertical-projection-lower} and the inverse function theorem
give neighborhoods $U_p\subset\Sigma$ of $p$ and
$O_p\subset\mathbb R^3$ of $\pi(p)$ such that
\begin{align}
        \pi|_{U_p}:U_p\rightarrow O_p
\end{align}
is a diffeomorphism. Since $(\pi|_M)^{-1}$ is continuous, after
shrinking $O_p$ we may assume that
\begin{align}
        (\pi|_M)^{-1}(\overline D\cap O_p)
        \subset
        U_p.
\end{align}
Consequently, the global inverse agrees on
$\overline D\cap O_p$ with the smooth local inverse
$(\pi|_{U_p})^{-1}$. Since $\pi(\partial M)=\partial D$, it also
follows that $\partial D$ is smooth.

Now define
\begin{align}
        u=h\circ(\pi|_M)^{-1}.
\end{align}
Then $u\in C^\infty(\overline D)$, $u=\tau$ on $\partial D$, and
\begin{align}
        \frac{|Du|^2}{1+|Du|^2}
        =
        |\grad_\Sigma h|^2
        \leq
        2\Lambda_*\tau
        <
        \frac12,
\end{align}
which proves \eqref{eq:low-component-slope}.
\end{proof}

Now we prove a weighted tilt integral as in Colding and Minicozzi \cite{colding2026minimal}. Eventually our goal is to show that this integral is infinite if we assume \(\Sigma\) is not a plane. Choose a connected component \(V\) of \(\{h<\tau\}\) which meets \(\{h<\tau/4\}\), and let \(u\) and \(D\) be as in Lemma~\ref{lem:low-component-global-graph}. Then we have \begin{lem}\label{lem:weighted-base-energy}
Identify $\{x_4=0\}$ with $\mathbb R^3$ and define $g:\mathbb R^3\rightarrow\mathbb R$ by
\begin{align}
        g(y)=
        \begin{cases}
        \tau-u(y),&y\in D,\\
        0,&y\notin D.
        \end{cases}
\end{align}
Then $g$ is globally $1$-Lipschitz and
\begin{align}
        \int_{\mathbb R^3}
        \frac{|\grad g(y)|^2}{1+|y|}
        \,\ud y
        <\infty.
        \label{eq:finite-weighted-base-energy}
\end{align}
\end{lem}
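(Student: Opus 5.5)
The plan has two parts. The Lipschitz bound comes from the graph structure in Lemma~\ref{lem:low-component-global-graph}. The weighted energy bound comes from integrating the identity $\Delta_\Sigma\Phi=E$ against the weight $\psi$ from the proof sketch, then transferring the result to the base $\mathbb R^3$.

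\textbf{Lipschitz bound.} By Lemma~\ref{lem:low-component-global-graph}, $u\in C^\infty(\overline D)$ with $u=\tau$ on $\partial D$, so $g$ is continuous on $\mathbb R^3$. Since $u<\tau$ on $D$, we have $g>0$ on $D$, and $|\grad g|=|Du|<1$ there.

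Fix $y_1,y_2\in\mathbb R^3$ and consider the segment $[y_1,y_2]$.
\begin{itemize}
\item If the segment lies in $D$, integrating along it gives $|g(y_1)-g(y_2)|\le|y_1-y_2|$.
\item If neither point lies in $D$, both values are $0$.
\item Otherwise, suppose $y_1\in D$ and the segment meets $\partial D$. Let $z_1$ be the first point of the segment in $\partial D$, so $[y_1,z_1)\subset D$ and $g(y_1)=g(y_1)-g(z_1)\le|y_1-z_1|$. If $y_2\in D$, define $z_2$ symmetrically, so $g(y_2)\le|y_2-z_2|$. If $y_2\notin D$, then $g(y_2)=0$.
\end{itemize}
In the last case, since $g\ge0$, we get $|g(y_1)-g(y_2)|\le\max\{g(y_1),g(y_2)\}\le|y_1-y_2|$.

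\textbf{Weighted energy on $V$.} On $V$ set $E=|\grad_\Sigma h|^2$, $\Phi=\tfrac12(\tau-h)^2$ and $\psi=(1+|x|^2)^{-1/2}$. Harmonicity of $h$ gives $\Delta_\Sigma\Phi=E$, and $0\le\Phi\le\tau^2/2$.

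Minimality gives $\Delta_\Sigma|x|^2=6$ and $|\grad_\Sigma|x|^2|^2\le4|x|^2$. A direct computation then yields
\begin{align}
\Delta_\Sigma\psi\le-3(1+|x|^2)^{-3/2}+3|x|^2(1+|x|^2)^{-5/2}\le0,
\qquad
|\grad_\Sigma\psi|\le(1+|x|^2)^{-1}.
\end{align}
Take a cutoff $\eta=\eta_R(|x|)$ equal to $1$ on $B_R$, supported in $B_{2R}$, with $|\grad_\Sigma\eta|\le C/R$ and $|\Delta_\Sigma\eta|\le C/R^2$. The second bound uses $|\grad_\Sigma|x||\le1$ and $\Delta_\Sigma|x|\le 2/|x|$.

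Apply Green's formula on the manifold with boundary $M=\overline V$. On $\partial M$ we have $h=\tau$, so both $\Phi$ and $\partial_n\Phi=-(\tau-h)\partial_nh$ vanish, and all boundary terms drop. Therefore
\begin{align}
\int_V E\,\psi\eta=\int_V\Phi\big(\eta\Delta_\Sigma\psi+2\langle\grad_\Sigma\psi,\grad_\Sigma\eta\rangle+\psi\Delta_\Sigma\eta\big)
\le C\tau^2R^{-3}\operatorname{Vol}(V\cap B_{2R}).
\end{align}
Since $V$ is a graph over $D$ with $|Du|<1$, its volume satisfies $\operatorname{Vol}(V\cap B_{2R})\le\sqrt2\,|B^3_{2R}|\le CR^3$. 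Hence the right side is bounded independently of $R$, and monotone convergence gives $\int_VE\psi<\infty$.

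\textbf{Transfer to the base.} On $V$ we have $E=|Du|^2/(1+|Du|^2)\ge\tfrac12|Du|^2$. The area element satisfies $\ud\mu_\Sigma\ge\ud y$. Since $|x|\le|y|+\tau\le |y|+1$, we have $\psi\ge c(1+|y|)^{-1}$. Thus $\int_D|Du|^2/(1+|y|)\,\ud y<\infty$.

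Outside $\overline D$ we have $\grad g=0$. The boundary $\partial D$ is smooth, so it has measure zero. This gives \eqref{eq:finite-weighted-base-energy}.

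The main obstacle is justifying the integration by parts on the noncompact manifold with boundary $\overline V$. The cutoff $\eta$ localizes the integral to a compact set, using properness of $\Sigma$. The vanishing of both $\Phi$ and its normal derivative on $\{h=\tau\}$ removes the boundary terms. The cubic volume bound for the graph then controls the cutoff errors.
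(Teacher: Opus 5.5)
Your proposal is correct and follows the paper's proof essentially step for step. It integrates $\Delta_\Sigma\Phi=E$ against $\eta_R\psi$ on $\overline V$ with no boundary terms, uses superharmonicity of $\psi$ and $O(R^{-3})$ cutoff errors controlled by the cubic volume of the single graph $V$, transfers to the base via $|Du|<1$ and $0<u<\tau$, and proves the Lipschitz bound by the same first-exit-point argument. One harmless slip: on a minimal $\Sigma^3$ one has $\Delta_\Sigma|x|=(3-|\grad_\Sigma|x||^2)/|x|\in[2/|x|,3/|x|]$, so your bound should read $\Delta_\Sigma|x|\le 3/|x|$, which still gives $|\Delta_\Sigma\eta|\le C/R^2$.
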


\begin{proof}
Let $x:\Sigma\rightarrow\mathbb R^4$ denote the position vector and set
\begin{align}
        E=|\grad_\Sigma h|^2,
        \qquad
        \Phi=\frac12(\tau-h)^2,
        \qquad
        \psi(x)=\frac{1}{\sqrt{1+|x|^2}}.
\end{align}
Then
\begin{align}
        \Delta_\Sigma\Phi=E,
        \qquad
        \Phi=|\grad_\Sigma\Phi|=0
        \quad\text{on }\partial V.
        \label{eq:quadratic-height-test}
\end{align}
Since $\Sigma$ is minimal and three-dimensional,
\begin{align}
        \Delta_\Sigma\psi
        =
        -3\frac{1+|x^\perp|^2}{(1+|x|^2)^{5/2}}
        \leq0,
        \label{eq:psi-superharmonic}
\end{align}
Here $x^\perp$ denotes the normal component of the position vector $x$. For $R\geq1$, let $\eta_R\in C_c^\infty(B_{2R}^{\mathbb R^4}(0))$ be a standard ambient radial cutoff satisfying
\begin{align}
        0\leq\eta_R\leq1,
        \qquad
        \eta_R=1\text{ on }B_R^{\mathbb R^4}(0),
        \qquad
        |\grad\eta_R|\leq\frac{C}{R},
        \qquad
        |\grad^2\eta_R|\leq\frac{C}{R^2}.
\end{align}
We choose the cutoffs so that $\eta_R\rightarrow1$ pointwise, and set $q_R=\eta_R\psi$. The support of $q_R$ in $\overline V$ is compact by properness. Integration by parts and \eqref{eq:quadratic-height-test} give
\begin{align}
        \int_V q_R E\,\ud\mu_\Sigma
        =
        \int_V\Phi\Delta_\Sigma q_R\,\ud\mu_\Sigma.
        \label{eq:quadratic-weighted-identity}
\end{align}
There is no boundary term because both $\Phi$ and its normal derivative vanish on $\partial V$. On the transition annulus,
\begin{align}
        \psi\leq\frac{C}{R},
        \qquad
        |\grad_\Sigma\psi|\leq\frac{C}{R^2},
        \qquad
        |\Delta_\Sigma\eta_R|\leq\frac{C}{R^2}.
\end{align}
Consequently, \eqref{eq:psi-superharmonic} gives
\begin{align}
        \Delta_\Sigma q_R
        \leq
        \frac{C}{R^3}
        \mathbf 1_{B_{2R}^{\mathbb R^4}(0)\setminus B_R^{\mathbb R^4}(0)}.
\end{align}
Since $V$ is a graph with $|Du|<1$,
\begin{align}
        \mu_\Sigma(V\cap B_{2R}^{\mathbb R^4}(0))
        \leq
        CR^3.
        \label{eq:single-sheet-cubic-growth}
\end{align}
As $0\leq\Phi\leq\tau^2/2$, equation \eqref{eq:quadratic-weighted-identity} gives
\begin{align}
        \int_V\eta_R\psi E\,\ud\mu_\Sigma
        \leq C.
\end{align}
Letting $R\rightarrow\infty$ and applying Fatou's lemma yields
\begin{align}
        \int_V
        \frac{E}{\sqrt{1+|x|^2}}
        \,\ud\mu_\Sigma
        <\infty.
        \label{eq:finite-graph-tilt}
\end{align}

On the graph of $u$,
\begin{align}
        E
        =
        \frac{|Du|^2}{1+|Du|^2},
        \qquad
        \ud\mu_\Sigma
        =
        \sqrt{1+|Du|^2}\,\ud y.
\end{align}
Since $|Du|<1$ and $0<u<\tau$, there is a constant $c=c(\tau)>0$ such that
\begin{align}
        \frac{E}{\sqrt{1+|x|^2}}\,\ud\mu_\Sigma
        &=
        \frac{|Du|^2}
        {\sqrt{1+|Du|^2}\sqrt{1+|y|^2+u^2}}
        \,\ud y
        \\
        &\geq
        c\frac{|Du|^2}{1+|y|}\,\ud y.
\end{align}
Therefore \eqref{eq:finite-graph-tilt} implies
\begin{align}
        \int_D
        \frac{|Du(y)|^2}{1+|y|}
        \,\ud y
        <\infty.
        \label{eq:finite-Du-energy}
\end{align}

It remains to prove the Lipschitz estimate. If the segment joining $y,z\in D$ is contained in $D$, then \eqref{eq:low-component-slope} gives $|g(y)-g(z)|\leq|y-z|$. If it leaves $D$, assume that $g(y)\geq g(z)$ and let $\xi$ be the first point of $\partial D$ on the segment starting at $y$. Since $g(\xi)=0$,
\begin{align}
        |g(y)-g(z)|
        \leq
        g(y)
        =
        |g(y)-g(\xi)|
        \leq
        |y-\xi|
        \leq
        |y-z|.
\end{align}
The cases in which one or both endpoints lie outside $D$ follow in the same way. Thus $g$ is globally $1$-Lipschitz. Its weak gradient is $-Du$ almost everywhere on $D$ and zero almost everywhere outside $D$, so \eqref{eq:finite-weighted-base-energy} follows from \eqref{eq:finite-Du-energy}.
\end{proof}

Our goal is now to show that the weighted integral \eqref{eq:finite-graph-tilt} is infinite on \(V\). We do this using the following capacity estimate:
\begin{lem}\label{lem:half-space-two-cap}
There exist universal constants $c,C>0$ with the following property. Let $r>0$, set $\mathbb S_r^2=\partial B_r^{\mathbb R^3}(0)$, and let $0<\rho<r/100$, and let $f\in W^{1,2}(\mathbb S_r^2)$. Suppose that there are geodesic caps $\mathcal{C}_1,\mathcal{C}_2\subset\mathbb S_r^2$, both of radius $\rho$, such that
\begin{align}
        f\geq \beta
        \quad \text{a.e. on } \mathcal{C}_1,
        \quad
        f\leq \alpha
        \quad\text{a.e. on } \mathcal{C}_2,
        \label{eq:half-space-cap-values}
\end{align}
where $\beta>\alpha\geq 0.$ Then
\begin{align}
        \int_{\mathbb S_r^2}
        |\nabla_{\mathbb S_r^2}f|^2
        \,\ud\mathcal H^2
        \geq
        \frac{c(\beta-\alpha)^2}{\log(Cr/\rho)}.
        \label{eq:half-space-two-cap}
\end{align}
\end{lem}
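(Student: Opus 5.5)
The plan is to reduce the estimate to the classical lower bound for the capacity of a planar annulus, using two facts. First, Dirichlet energy on two-dimensional domains is invariant under scaling and conformal maps. Second, truncation does not increase energy.

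\textbf{Normalization.} Replace $f$ by $\varphi=\min\{1,\max\{0,(f-\alpha)/(\beta-\alpha)\}\}$. Then $\varphi\in W^{1,2}(\mathbb S^2_r)$, $|\nabla\varphi|\le|\nabla f|/(\beta-\alpha)$ a.e., and $\varphi=1$ a.e. on $\mathcal C_1$ and $\varphi=0$ a.e. on $\mathcal C_2$. It therefore suffices to prove $\int|\nabla\varphi|^2\ge c/\log(Cr/\rho)$. Note that the two caps cannot overlap in a set of positive measure, so they are disjoint up to their boundaries.

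Next we rescale $\mathbb S^2_r$ to the unit sphere. Dirichlet energy of functions on surfaces is scale invariant, so we may take $r=1$ and cap radius $\varepsilon=\rho/r<1/100$.

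\textbf{Stereographic projection.} Let $p_2$ be the center of $\mathcal C_2$, and let $\sigma$ be stereographic projection from $p_2$ onto the plane tangent at $-p_2$ (or the equatorial plane). Then $\sigma$ is conformal, so $\int_{\mathbb S^2}|\nabla\varphi|^2=\int_{\mathbb R^2}|\nabla(\varphi\circ\sigma^{-1})|^2$. The map $\sigma$ sends $\mathcal C_2\setminus\{p_2\}$ onto the exterior $\{|z|\ge R_2\}$ of a disk, where $R_2\approx 2\cot(\varepsilon/2)\le C/\varepsilon$. It sends $\mathcal C_1$ onto a closed disk $\overline{B_s(z_0)}$, since circles go to circles. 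Because $\mathcal C_1$ is disjoint from $\mathcal C_2$, we have $\overline{B_s(z_0)}\subset \overline{B_{R_2}(0)}$.

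For the lower bound on $s$: the conformal factor of $\sigma$ relative to the round metric is bounded below by a universal constant, since lengths are only expanded away from the pole, up to a factor $1/2$ depending on the normalization. Hence the image disk has $s\ge c\varepsilon$.

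I expect this to be the main point to check carefully: it is the only place where the geometry of the two caps (their possibly arbitrary relative position) enters. Projecting from the center of $\mathcal C_2$ makes the relative position irrelevant.

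\textbf{Annulus capacity.} Set $\tilde\varphi=\varphi\circ\sigma^{-1}$. Then $\tilde\varphi=1$ on $B_s(z_0)$, and $\tilde\varphi=0$ outside $B_{R_2}(0)$, which contains the complement of $B_{2R_2}(z_0)$ because $|z_0|\le R_2$. In polar coordinates $(t,\theta)$ about $z_0$, for a.e. $\theta$ the function $t\mapsto\tilde\varphi$ is absolutely continuous on $[s,2R_2]$ and drops from $1$ to $0$. Cauchy--Schwarz gives
\[
1\le\Big(\int_s^{2R_2}|\partial_t\tilde\varphi|\,\ud t\Big)^2\le \log\frac{2R_2}{s}\int_s^{2R_2}|\partial_t\tilde\varphi|^2\,t\,\ud t .
\]
Integrating in $\theta$ then yields
\[
\int_{\mathbb R^2}|\nabla\tilde\varphi|^2\ge\frac{2\pi}{\log(2R_2/s)}\ge\frac{2\pi}{\log(C/\varepsilon)}.
\]
Undoing the normalization gives \eqref{eq:half-space-two-cap} with $\varepsilon=\rho/r$. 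The condition $\rho<r/100$ guarantees $\log(Cr/\rho)$ is bounded below by a positive constant, so the constants can be taken universal.
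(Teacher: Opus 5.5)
Your argument is correct, but it takes a different route from the paper's. The paper does not truncate $f$ and does not use conformal invariance. On the unit sphere it uses the mean-zero Green's function $G(x,y)=-\frac{1}{4\pi}\log(1-\langle x,y\rangle)$. For each cap it solves $-\Delta\phi_{\mathcal C}=\mu_{\mathcal C}-\ud A/(4\pi)$, where $\mu_{\mathcal C}$ is normalized area on the cap. The logarithmic singularity of $G$ gives $\int|\nabla\phi_{\mathcal C}|^2=\iint G\,\ud\mu_{\mathcal C}\,\ud\mu_{\mathcal C}\le C\log(C/\rho)$. By duality, each cap average of $f$ then lies within $C(\log(C/\rho))^{1/2}\|\nabla f\|_{L^2}$ of the global mean, and the two-cap bound follows by the triangle inequality.

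What the paper's route buys: each cap is compared with the uniform measure separately, so the relative position of the caps never enters and no truncation is needed. The hypotheses are used only through the cap averages, and the argument should carry over to other closed surfaces with the same logarithmic Green's function bound.

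What your route buys: you handle the relative position by projecting from the center of $\mathcal C_2$, which reduces everything to the explicit capacity of a planar annulus. This is more elementary and self-contained, since no Green's function asymptotics need to be cited, and it gives explicit constants. It does rely on the special conformal geometry of $\mathbb S^2$ (stereographic projection, circles to circles) and on the pointwise a.e. bounds on the caps.

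The step you flagged is fine. With the equatorial normalization, $|d\sigma|=(1+|z|^2)/2\ge 1/2$. Pull back the straight segment from the image of the center of $\mathcal C_1$ to the nearest point of $\partial\sigma(\mathcal C_1)$; its preimage joins the center of $\mathcal C_1$ to $\partial\mathcal C_1$, so it has spherical length at least $\varepsilon$. Hence the segment has length at least $\varepsilon/2$, and $s\ge\varepsilon/2$.

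One bookkeeping slip: since $R_2\lesssim\varepsilon^{-1}$ and $s\gtrsim\varepsilon$, the ratio $2R_2/s$ is of order $\varepsilon^{-2}$, not $\varepsilon^{-1}$. So you actually get $2\pi/\log(C/\varepsilon^2)=\pi/\log(\sqrt{C}/\varepsilon)$, which changes only the constants.
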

\begin{proof}
By scaling, it suffices to prove \eqref{eq:half-space-two-cap} on the
unit sphere, with $r=1$. Let $\ud\sigma=\ud A/(4\pi)$, where $\ud A$
is the area form on the sphere $\mathbb S^2$. If
$\mathcal C\subset\mathbb S^2$ is a spherical cap of radius $\rho$,
define the normalized measure
$\ud\mu_{\mathcal C}=|\mathcal C|^{-1}\mathbf 1_{\mathcal C}\,\ud A$,
where $\mathbf 1_{\mathcal C}$ is the indicator function of
$\mathcal C$, and set
$\nu_{\mathcal C}=\mu_{\mathcal C}-\sigma$.

Since $r=1$ and $\rho<1/100$, we have $|\mathcal C|\simeq\rho^2$. Let $\phi_{\mathcal C}\in W^{1,2}(\mathbb S^2)$ be the unique mean-zero weak solution of
\begin{align}
        -\Delta_{\mathbb S^2}\phi_{\mathcal C}
        =
        \nu_{\mathcal C},
        \qquad
        \int_{\mathbb S^2}
        \phi_{\mathcal C}(x)\,\ud\sigma(x)
        =
        0.
        \label{eq:half-space-cap-potential-new}
\end{align}
We first prove the estimate
\begin{align}
        \int_{\mathbb S^2}
        |\nabla\phi_{\mathcal C}|^2
        \,\ud A
        \leq
        C\log\frac{C}{\rho}
        \label{eq:half-space-cap-potential-bound-new}
\end{align}
for some universal constant $C>0.$ Let $G(x,z)$ be the mean-zero Green kernel (cf. \cite[Theorem~4.13]{aubin1998some}) normalized by
\begin{align}
        -\Delta_xG(x,z)
        =
        \delta_z-\sigma,
        \quad
        \int_{\mathbb S^2}
        G(x,z)\,\ud\sigma(x)
        =
        0.
        \label{eq:half-space-sphere-Green}
\end{align}
Then, up to an additive constant, the Green's function satisfies (cf. \cite[Appendix~A]{okikiolu2008negative})
\begin{align}
        G(x,y)
        =
        -\frac{1}{4\pi}
        \log(1-\langle x,y\rangle),
\end{align}
and hence
\begin{align}
        |G(x,y)|
        \leq
        C\left(1+
        \left|\log d_{\mathbb S^2}(x,y)\right|\right),
        \label{eq:half-space-Green-log-bound-new}
\end{align}
for some universal constant $C>0$. Thus, we obtain
\begin{align}
        \int_{\mathbb S^2}
        |\nabla\phi_{\mathcal C}|^2
        \,\ud A
        =\int_{\mathbb S^2}
\phi_{\mathcal C}\,\ud\nu_{\mathcal C} =
        \iint_{\mathbb S^2\times\mathbb S^2}
        G(x,y)\,\ud\nu_{\mathcal C}(x)\,\ud\nu_{\mathcal C}(y)
        =
        \iint_{\mathcal C\times\mathcal C}
        G(x,y)\,\ud\mu_{\mathcal C}(x)\,\ud\mu_{\mathcal C}(y),\qquad 
        \label{eq:half-space-cap-energy-reduction-new}
\end{align}
where in the first equality we used
\eqref{eq:half-space-cap-potential-new} and integration by parts, in the second equality we used the Green's function representation, and in the third equality we used the mean-zero normalization \eqref{eq:half-space-sphere-Green} and the symmetry of the Green's function:
\begin{align}
        \int_{\mathbb S^2}
        G(x,z)\,\ud\sigma(x)
        =
        \int_{\mathbb S^2}
        G(x,z)\,\ud\sigma(z)
        =
        0.
\end{align}
For each fixed $y\in\mathcal C$, the spherical cap $\mathcal C$ is contained in a geodesic ball $B_{2\rho}^{\mathbb S^2}(y)$. Therefore, geodesic polar coordinates
centered at $y$ give
\begin{align}
        \int_{\mathcal C}
        \left(1+
        \left|\log d_{\mathbb S^2}(x,y)\right|\right)
        \,\ud A(x)
        \leq
        C\int_0^{2\rho}
        (1+|\log s|)\sin s\,\ud s                                  
        \leq
        C\rho^2\log\frac{C}{\rho}.
        \label{eq:half-space-log-cap-integral-new}
\end{align}
Combining \eqref{eq:half-space-Green-log-bound-new},
\eqref{eq:half-space-cap-energy-reduction-new}, and
\eqref{eq:half-space-log-cap-integral-new}, and using
$|\mathcal C|\simeq\rho^2$, gives
\eqref{eq:half-space-cap-potential-bound-new}. From \eqref{eq:half-space-cap-potential-new}, for every
$f\in W^{1,2}(\mathbb S^2)$,
\begin{align}
        \left|
        \int_{\mathcal C}f\,\ud\mu_{\mathcal C}
        -
        \int_{\mathbb S^2}f\,\ud\sigma
        \right|
        =
        \left|
        \int_{\mathbb S^2}f\,\ud\nu_{\mathcal C}
        \right|                                                       =
        \left|
        \int_{\mathbb S^2}
        \langle\nabla f,\nabla\phi_{\mathcal C}\rangle
        \,\ud A
        \right|                                                      
        \leq
        C\left(\log\frac{C}{\rho}\right)^{1/2}
        \left(
        \int_{\mathbb S^2}|\nabla f|^2\,\ud A
        \right)^{1/2}.
        \label{eq:half-space-cap-average-new}\quad 
\end{align}
Applying \eqref{eq:half-space-cap-average-new} to
$\mathcal C_1$ and $\mathcal C_2$, and using
\eqref{eq:half-space-cap-values}, we obtain
\begin{align}
        \beta-\alpha
        &\leq
        \int_{\mathcal C_1}f\,\ud\mu_{\mathcal C_1}
        -
        \int_{\mathcal C_2}f\,\ud\mu_{\mathcal C_2}\leq
        C\left(\log\frac{C}{\rho}\right)^{1/2}
        \left(
        \int_{\mathbb S^2}|\nabla f|^2\,\ud A
        \right)^{1/2}.
\end{align}
Squaring and rearranging the above display gives \eqref{eq:half-space-two-cap}.
\end{proof}
Finally, we need the following lemma to locate spherical caps on the set \(V\):
\begin{lem}\label{lem:two-noncompact-levels}
There is a regular value
\begin{align}
        t\in(\tau/4,\tau/2)
\end{align}
such that $\partial V$ has a connected noncompact component $S\subset\{h=\tau\}$ and $V\cap\{h=t\}$ has a connected noncompact component $\Gamma\subset\{h=t\}$.
\end{lem}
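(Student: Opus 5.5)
Since $\tau$ is a regular value of $h$ and, by Lemma~\ref{lem:low-component-global-graph}, $V$ is a graph over $D$ with $u=\tau$ on $\partial D$, the plan is to produce both level components by a nonemptiness argument followed by an application of Lemma~\ref{lem:no-compact-regular-level}. That lemma applies to $\Sigma$ itself: $\Sigma$ is orientable, connected, noncompact (it is proper and not compact, by the maximum principle), and $H_2(\Sigma;\bZ_2)=0$; moreover $h$ is harmonic and nonconstant. Hence no regular level set of $h$ on $\Sigma$ has a compact connected component.

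\emph{The component $S$.} First check that $\partial V\neq\emptyset$. If $\partial V=\emptyset$, then $V$ would be open and closed in $\Sigma$, so $V=\Sigma$ by connectedness. This would give $h<\tau$ on all of $\Sigma$, contradicting $\tau<\sup_\Sigma h$ from \eqref{eq:tau-choice}. Thus $\partial V=\partial M$ (by \eqref{eq:M-boundary}) is a nonempty union of components of the regular level set $\{h=\tau\}$. Indeed, $\partial M$ is open and closed in $\{h=\tau\}$, because near a point of $\{h=\tau\}$ the set $\{h<\tau\}$ is a connected half-ball. Take any connected component $S$ of $\partial V$. It is a connected component of $\{h=\tau\}$, so Lemma~\ref{lem:no-compact-regular-level} says it is noncompact.

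\emph{The component $\Gamma$.} By Sard's theorem, pick a regular value $t\in(\tau/4,\tau/2)$ of $h$. By the choice of $V$, there is a point $p\in V$ with $h(p)<\tau/4<t$. Since $\partial V\neq\emptyset$, there is also a point of $V$ where $h$ is arbitrarily close to $\tau>t$. The set $V$ is connected, so $h|_V$ takes the value $t$, and $V\cap\{h=t\}\neq\emptyset$. Let $\Gamma$ be a connected component of $V\cap\{h=t\}$. Because $\{h=t\}\subset\{h<\tau\}$ and $V$ is a component of $\{h<\tau\}$, every component of $\{h=t\}$ that meets $V$ lies entirely in $V$. So $\Gamma$ is a full connected component of the regular level set $\{h=t\}$ in $\Sigma$, and Lemma~\ref{lem:no-compact-regular-level} again shows it is noncompact.

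The only delicate point is that Lemma~\ref{lem:no-compact-regular-level} must be applied to components of level sets of $\Sigma$, not of $V$. This is handled by the two observations that components of $\partial V$ are whole components of $\{h=\tau\}$, and that components of $\{h=t\}$ meeting $V$ lie in $V$. I expect the main care to be needed in justifying $\partial V\neq\emptyset$, which is exactly where $\tau<\sup_\Sigma h$ is used.
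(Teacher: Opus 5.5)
Your proposal is correct and follows essentially the same route as the paper: you show $\partial V\neq\emptyset$ from $\tau<\sup_\Sigma h$, observe that components of $\partial V$ and of $V\cap\{h=t\}$ are full components of regular level sets of $h$ on $\Sigma$, and then invoke Lemma~\ref{lem:no-compact-regular-level}. Where you differ is only in detail: you spell out the open--closed argument for $\partial M$ inside $\{h=\tau\}$ and the intermediate value step on the connected set $V$, both of which the paper states in a single line each.
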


\begin{proof}
The boundary $\partial V$ is nonempty. Otherwise $V$ is both open and closed in $\Sigma$, and hence $V=\Sigma$. This would give $h<\tau$ on $\Sigma$, contrary to $\tau<\sup_\Sigma h$. Since $\tau$ is a regular value, every connected component of $\partial V$ is a connected component of the full level $\{h=\tau\}$. Lemma~\ref{lem:no-compact-regular-level} shows that each such component is noncompact. Fix one of them and denote it by $S$.

Since points of $V$ approach $S$, the image $h(V)$ contains $(\tau/4,\tau)$. Sard's theorem gives a regular value $t\in(\tau/4,\tau/2)$. In particular, $V\cap\{h=t\}$ is nonempty. Let $\Gamma$ be one of its connected components. Since $t<\tau$, it is also a connected component of the full level $\{h=t\}$, and Lemma~\ref{lem:no-compact-regular-level} implies that $\Gamma$ is noncompact.
\end{proof}

\begin{proof}[Proof of Theorem~\ref{thm: maintheorem}]
Let $\Sigma$ satisfy the assumptions of Theorem~\ref{thm: maintheorem} and suppose, for contradiction, that $\Sigma$ is not a plane. Let $V$, $S$, $\Gamma$, $t$, and $g$ be as above, and set
\begin{align}
        S_0=\pi(S),
        \qquad
        \Gamma_0=\pi(\Gamma).
\end{align}
The sets $S$ and $\Gamma$ are closed in $\overline V$. Since $\pi:\overline V\rightarrow\overline D$ is a homeomorphism, their images are closed, connected, and noncompact in $\overline D$. As $\overline D$ is closed in $\mathbb R^3$, the sets $S_0$ and $\Gamma_0$ are closed, connected, and unbounded in $\mathbb R^3$. Moreover,
\begin{align}
        g=0
        \quad\text{on }S_0,
        \qquad
        g=\tau-t
        \quad\text{on }\Gamma_0.
        \label{eq:two-trace-values}
\end{align}
The sets $\{|y|:y\in S_0\}$ and $\{|y|:y\in\Gamma_0\}$ are connected unbounded subsets of $[0,\infty)$. Hence there is $R_0<\infty$ such that $S_0$ and $\Gamma_0$ meet every sphere $\partial B_r^{\mathbb R^3}(0)$ with $r\geq R_0$.

Set
\begin{align}
        a=\tau-t,
        \qquad
        \rho_0=\frac{a}{4}.
\end{align}
For every $r\geq R_0$, choose
\begin{align}
        q_r\in S_0\cap\partial B_r^{\mathbb R^3}(0),
        \qquad
        p_r\in\Gamma_0\cap\partial B_r^{\mathbb R^3}(0).
\end{align}
Since $g$ is $1$-Lipschitz, on the geodesic cap of radius $\rho_0$ centered at $q_r$ we have
\begin{align}
        g\leq\frac{a}{4},
\end{align}
while on the geodesic cap of radius $\rho_0$ centered at $p_r$ we have
\begin{align}
        g\geq\frac{a}{2}.
\end{align}
After increasing $R_0$, Lemma~\ref{lem:half-space-two-cap}, applied to $f=g|_{\partial B_r^{\mathbb R^3}(0)}$ with
$\alpha=a/4$ and $\beta=a/2$, gives
\begin{align}
        \int_{\partial B_r^{\mathbb R^3}(0)}
        |\grad_{\partial B_r}g|^2
        \,\ud\mathcal H^2
        \geq
        \frac{ca^2}{16\log(Cr/\rho_0)}
\end{align}
for every $r\geq R_0$. Consequently,
\begin{align}
        \int_{\mathbb R^3}
        \frac{|\grad g(y)|^2}{1+|y|}
        \,\ud y
        &\geq
        \int_{R_0}^\infty
        \frac{1}{1+r}
        \int_{\partial B_r^{\mathbb R^3}(0)}
        |\grad_{\partial B_r}g|^2
        \,\ud\mathcal H^2
        \,\ud r
        \\
        &\geq
        \frac{ca^2}{16}
        \int_{R_0}^\infty
        \frac{\ud r}{(1+r)\log(Cr/\rho_0)}
        =
        \infty.
\end{align}
This contradicts \eqref{eq:finite-weighted-base-energy}. Thus the assumption that \(\Sigma\) is not a plane is impossible, and hence \(\Sigma\) is a plane.
\end{proof}

\bibliographystyle{alpha}
\bibliography{refs}
\end{document}